\newtheorem{theorem}{Theorem}
\theoremstyle{plain}
\newtheorem{definition}{Definition}
\newtheorem{example}{Example}
\newtheorem{lemma}{Lemma}
\newtheorem{remark}{Remark}
\numberwithin{equation}{section}
\begin{document}
\title{On the approximation by convolution type double singular integral
operators}
\author{Mine MENEKSE YILMAZ}
\address{Gaziantep University, Faculty of Arts and Science, Department of
Mathematics, Gaziantep, Turkey}
\email{menekse@gantep.edu.tr}
\author{Lakshmi Narayan Mishra}
\address[L.N.Mishra]{ $^{1}$Department of Mathematics, Mody University of
Science and Technology, Lakshmangarh, Sikar Road, Sikar, Rajasthan-332 311\\
$^{2}$L. 1627 Awadh Puri Colony Beniganj, Phase III, Opposite Industrial
Training Institute (I.T.I.), Ayodhya Main Road, Faizabad 224 001, Uttar
Pradesh, India}
\email{lakshminarayanmishra04@gmail.com}
\author{Gumrah UYSAL}
\address{Department of Computer Technologies, Division of Technology of
Information Security, Karabuk University, Karabuk 78050, Turkey}
\email{guysal@karabuk.edu.tr}
\subjclass{Primary 41A35; Secondary 41A25}
\keywords{generalized Lebesgue point; pointwise convergence; rate of
convergence}

\begin{abstract}
In this paper, {\small w}e prove the pointwise convergence and the rate of
pointwise convergence for a family of singular integral operators in
two-dimensional setting in the following form:%
\begin{equation*}
L_{\lambda }\left( f;x,y\right) =\underset{D}{\diint }f\left( t,s\right)
K_{\lambda }\left( t-x,s-y\right) dsdt,\text{ }\left( x,y\right) \in D,
\end{equation*}%
where $D=\left \langle a,b\right \rangle \times \left \langle c,d\right \rangle $
is an arbitrary closed, semi-closed or open rectangle in $\mathbb{R}^{2}$
and $\lambda \in \Lambda ,$ $\Lambda $ is a set of non-negative numbers with
accumulation point $\lambda _{0}$. Also, we provide an example to support
these theoretical results. In contrast to previous works, the kernel
function $K_{\lambda }\left( t,s\right) $ does not have to be even, positive
or 2$\pi -$periodic.
\end{abstract}

\maketitle

\section{\textbf{Introduction}}

Taberski \cite{Taberski1} studied the pointwise convergence of integrable
functions and the approximation properties of derivatives of integrable
functions in $L_{1}\left( -\pi ,\pi \right) $ by a family of convolution
type singular integral operators depending on two parameters in the
following form:

\begin{equation}
L_{\lambda }\left( f;x\right) =\underset{-\pi }{\overset{\pi }{\int }}%
f\left( t\right) K_{\lambda }\left( t-x\right) dt,\text{ \ }x\in \left( -\pi
,\pi \right) ,\text{ \ }\lambda \in \Lambda ,  \tag{1.1}
\end{equation}%
where $K_{\lambda }\left( t\right) $ is the kernel satisfying suitable
assumptions and $\Lambda $ is a given set of non-negative numbers with
accumulation point $\lambda _{0}.$

After Taberski's study \cite{Taberski1}, Gadjiev \cite{Gadjiev} and
Rydzewska \cite{Riz1} gave some approximation theorems concerning pointwise
convergence and the order of pointwise convergence for operators of type
(1.1) at a generalized Lebesgue point and a $\mu -$generalized Lebesgue
point of $f\in L_{1}\left( -\pi ,\pi \right) $, respectively. Later on,
Bardaro \cite{Bardaro1} estimated the degree of pointwise convergence of
general type singular integrals at generalized Lebesgue points of the
functions $f\in L_{1}\left( 
\mathbb{R}
\right) $.

In \cite{Taberski2}, Taberski explored the pointwise convergence of
integrable functions in $L_{1}\left( Q\right) $ by a three-parameter family
of convolution type singular integral operators of the form:%
\begin{equation}
L_{\lambda }\left( f;x,y\right) =\underset{Q}{\iint }f\left( t,s\right)
K_{\lambda }\left( t-x,s-y\right) dsdt,\text{ \ }\left( x,y\right) \in Q, 
\tag{1.2}
\end{equation}%
where $Q=\left \langle -\pi ,\pi \right \rangle \times \left \langle -\pi ,\pi
\right \rangle $ is a closed, semi-closed or open rectangle. In this work,
the kernel function $K_{\lambda }$ was non-negative, even and $2\pi -$%
periodic with respect to both variables, seperately. Then, Rydzewska \cite%
{Riz2} extended this work by obtaining the rate of pointwise convergence at
a $\mu -$generalized $d-$point. Here, note that all other assumptions on the
indicated operators were same with \cite{Taberski2}. In particular, for
further studies on the convergence of double singular integral operators, we
address the reader to \cite{Siu1, Siu2, uysal1, Gadjiev2}.

In this study, we also investigated the pointwise convergence and the rate
of convergence of the operators similar to the studies above. In contrast to
previous works, the kernel function $K_{\lambda }\left( t,s\right) $ does
not have to be even, non-negative and 2$\pi -$periodic with respect to each
variable.

The main contribution of this paper is to investigate the rate of pointwise
convergence of the convolution type singular integral operators in the
following form:%
\begin{equation}
L_{\lambda }\left( f;x,y\right) =\underset{D}{\iint }f\left( t,s\right)
K_{\lambda }\left( t-x,s-y\right) dsdt,\  \  \left( x,y\right) \in D  \tag{1.3}
\end{equation}%
where $D=\left \langle a,b\right \rangle \times \left \langle c,d\right \rangle $
is an arbitrary closed, semi-closed or open bounded rectangle in $\mathbb{R}%
^{2},$ to $f\in L_{1}\left( D\right) ,$ at a $\mu $-generalized Lebesgue
point as $\left( x,y,\lambda \right) \rightarrow \left( x_{0},y_{0},\lambda
_{0}\right) .$ Here, $L_{1}\left( D\right) $ is the collection of all
measurable functions $f$ for which $\left \vert f\right \vert $ is integrable
on $D,$ $\Lambda \subset $ $\overset{-}{%
\mathbb{R}
}$ is a set of non-negative indices with accumulation point $\lambda _{0}$.

The paper is organized as follows: In Section 2, we introduce the
fundamental definitions. In Section 3, we prove the existence of the
operators of type (1.3). In Section 4, we present two theorems concerning
the pointwise convergence of $L_{\lambda }\left( f;x,y\right) $ to $f\left(
x_{0},y_{0}\right) $ whenever $\left( x_{0},y_{0}\right) $ is a $\mu $%
-generalized Lebesgue point of $f$ , for the cases $D$ is bounded rectangle
and $D=%
\mathbb{R}
^{2}$. In Section 5, we establih the rate of convergence of operators\ of
type (1.3) to $f\left( x_{0},y_{0}\right) $ as $\left( x,y,\lambda \right) $
tends to $\left( x_{0},y_{0},\lambda _{0}\right) $ and we conclude the paper
with an example to support our results.

\section{Preliminaries}

In this section we introduce the main definitions used in this paper.

The following definition is obtained by modifying the $\mu -$generalized
Lebesgue point definition given in \cite{Riz2}.

\begin{definition}
A point $\left( x_{0},y_{0}\right) \in D$\ is called a $\mu -$generalized
Lebesgue point of function $f\in L_{1}\left( D\right) $\textit{\ if}%
\begin{equation*}
\underset{\left( h,k\right) \rightarrow \left( 0,0\right) }{\lim }\frac{1}{%
\mu _{1}(h)\mu _{2}(k)}\overset{h}{\underset{0}{\dint }}\overset{k}{\underset%
{0}{\int }}\left \vert f\left( t+x_{0},s+y_{0}\right) -f\left(
x_{0},y_{0}\right) \right \vert dsdt=0,
\end{equation*}%
where $\mu _{1}(h)=\underset{0}{\overset{h}{\tint }}\rho _{1}(t)dt>0,$ $%
0<h<\delta _{0}<\min \{b-a,d-c\}$ and $\rho _{1}(t)$ is an integrable and
non-negative function on $\left[ 0,\delta _{0}\right] $ and similarly, $\mu
_{2}(k)=\underset{0}{\overset{k}{\tint }}\rho _{2}(s)ds>0,$ $0<k<\delta
_{0}<\min \{b-a,d-c\}$ and $\rho _{2}(s)$ is an integrable and non-negative
function on $\left[ 0,\delta _{0}\right] .$
\end{definition}

Now, we define the new set of kernel functions by using some of the kernel
conditions presented in \cite{Siu1}.

\begin{definition}
$\left( Class\text{ }A\right) $\textbf{\ }We will say that the function $%
K_{\lambda }\left( t,s\right) $ belongs to class $A$, if the following
criterions are fulfilled:
\end{definition}

\begin{description}
\item[a] $K_{\lambda }\left( t,s\right) $ is defined and integrable as a
function of $(t,s)$ on $%
\mathbb{R}
^{2}$ for each fixed $\lambda \in \Lambda $ and $\left \Vert \left \vert
K_{\lambda }\right \vert \right \Vert _{L_{1}(%
\mathbb{R}
^{2})}\leq M,$ $\forall \lambda \in \Lambda .$

\item[b] For fixed $(t_{0},s_{0})\in D,$ $K_{\lambda }\left(
t_{0},s_{0}\right) $ tends to infinity as\ $\lambda $ tends to $\lambda
_{0}. $

\item[c] $\underset{\left( x,y,\lambda \right) \rightarrow \left(
x_{0},y_{0},\lambda _{0}\right) }{\lim }\left \vert \underset{\mathbb{R}^{2}}%
{\diint }K_{\lambda }\left( t-x,s-y\right) dsdt-1\right \vert =0.$

\item[d] $\underset{\lambda \rightarrow \lambda _{0}}{\lim }\left \vert
K_{\lambda }\left( \gamma ,0\right) \right \vert =\underset{\lambda
\rightarrow \lambda _{0}}{\lim }\left \vert K_{\lambda }\left( 0,\gamma
\right) \right \vert =0,\  \  \forall \gamma >0.$

\item[e] $\underset{\lambda \rightarrow \lambda _{0}}{\lim }\underset{%
\mathbb{R}
^{2}\backslash \left \langle -\gamma ,\gamma \right \rangle \times \left
\langle -\gamma ,\gamma \right \rangle }{\diint }\left \vert K_{\lambda
}\left( t,s\right) \right \vert dsdt=0,\  \  \forall \gamma >0.$

\item[f] There exist the numbers $\delta _{1}>0$ and $\delta _{2}>0$ such
that $\left \vert K_{\lambda }\left( t,s\right) \right \vert $ is
monotonically increasing with respect to $t$\ on $(-\delta _{1},0]$ and
monotonically decreasing on $[0,\delta _{1})$ and similarly \ $\left \vert
K_{\lambda }\left( t,s\right) \right \vert $is monotonically increasing with
respect to $s$\ on $(-\delta _{2},0]$ and monotonically decreasing on $%
[0,\delta _{2})$ for any $\lambda \in \Lambda $. Similarly, $\left \vert
K_{\lambda }\left( t,s\right) \right \vert $ is bimonotonically increasing
with respect to $(t,s)$\ on $[0,\delta _{1})\times \lbrack 0,\delta _{2})$
and $(-\delta _{1},0]\times (-\delta _{2},0]$ and bimonotonically decreasing
with respect to $(t,s)$ on $[0,\delta _{1})\times (\delta _{2},0]$ and $%
(-\delta _{1},0]\times \lbrack 0,\delta _{2})$ for any $\lambda \in \Lambda $%
.
\end{description}

\begin{remark}
If the function $g:%
\mathbb{R}
^{2}\rightarrow 
\mathbb{R}
$ is bimonotonic on $[\alpha _{1},\alpha _{2}]\times \lbrack \beta
_{1},\beta _{2}]\subset 
\mathbb{R}
^{2},$ then the equality given by 
\begin{eqnarray*}
V(g;[\alpha _{1},\alpha _{2}]\times \lbrack \beta _{1},\beta _{2}]) &=&%
\overset{\alpha _{2}}{\underset{\alpha _{1}}{\dbigvee }}\overset{\beta _{2}}{%
\underset{\beta _{1}}{\dbigvee }}\left( g(t,s)\right) \\
&=&\left \vert g(\alpha _{1},\beta _{1})-g(\alpha _{1},\beta _{2})-g(\alpha
_{2},\beta _{1})+g(\alpha _{2},\beta _{2})\right \vert
\end{eqnarray*}%
holds \cite{Taberski2, Ghorpade}.
\end{remark}

Throughout this paper, we suppose that the kernel $K_{\lambda }\left(
t,s\right) $\textit{\ }belongs to class\textit{\ }$A$ and $\delta _{0}\leq
\min \left \{ \delta _{1},\delta _{2}\right \} .$

\section{\textbf{Existence of the operators}}

\begin{lemma}
If $f\in L_{1}(D)$, then the operator $L_{\lambda }\left( f;x,y\right) $
defines a continuous transformation acting on $L_{1}(D\mathbb{)}$.
\end{lemma}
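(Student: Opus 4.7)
The plan is to show that $L_\lambda$ is a bounded linear operator from $L_1(D)$ into $L_1(D)$; since linearity is immediate from the definition, continuity will then follow from boundedness. The target estimate is $\|L_\lambda f\|_{L_1(D)} \leq M \|f\|_{L_1(D)}$, where $M$ is the uniform bound on $\|\,|K_\lambda|\,\|_{L_1(\mathbb{R}^2)}$ provided by condition (a) of Class $A$.

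First, I would form
\[
\|L_\lambda f\|_{L_1(D)} = \iint_D \left| \iint_D f(t,s) K_\lambda(t-x,s-y)\, ds\, dt \right| dx\, dy,
\]
move the absolute value inside and bound by
\[
\iint_D \iint_D |f(t,s)|\, |K_\lambda(t-x,s-y)|\, ds\, dt\, dx\, dy.
\]
Next, using Tonelli's theorem (which applies since the integrand is nonnegative and measurable as a function of four variables), I would swap the order of integration to obtain
\[
\iint_D |f(t,s)| \left( \iint_D |K_\lambda(t-x,s-y)|\, dx\, dy \right) ds\, dt.
\]
For each fixed $(t,s)\in D$, the change of variables $u=t-x$, $v=s-y$ and the obvious inclusion of the translated rectangle $t-D$, $s-D$ into $\mathbb{R}^2$ give
\[
\iint_D |K_\lambda(t-x,s-y)|\, dx\, dy \leq \iint_{\mathbb{R}^2} |K_\lambda(u,v)|\, du\, dv \leq M,
\]
by condition (a). Combining the two steps yields $\|L_\lambda f\|_{L_1(D)} \leq M \|f\|_{L_1(D)} < \infty$, which simultaneously proves that $L_\lambda f$ is finite almost everywhere on $D$ (hence well-defined), belongs to $L_1(D)$, and that $L_\lambda$ is bounded. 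Linearity then gives continuity, since for $f,g \in L_1(D)$ we have $\|L_\lambda f - L_\lambda g\|_{L_1(D)} = \|L_\lambda(f-g)\|_{L_1(D)} \leq M \|f-g\|_{L_1(D)}$.

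The only technical point that needs a little care is the measurability of $(t,s,x,y) \mapsto f(t,s) K_\lambda(t-x,s-y)$ on $D \times D$, which is required before invoking Tonelli. This follows routinely because $f$ is measurable on $D$ as a function of $(t,s)$, $K_\lambda$ is measurable on $\mathbb{R}^2$ by (a), and composition with the continuous affine map $(t,s,x,y)\mapsto(t-x,s-y)$ preserves measurability, so the product is measurable on $D \times D$. I do not expect any serious obstacle; the argument is essentially a two-dimensional Young-type convolution inequality restricted to the bounded rectangle $D$, and none of the pointwise or monotonicity hypotheses (b)--(f) are needed for this existence statement — only the uniform $L_1$-bound in (a).
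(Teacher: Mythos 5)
Your proof is correct and follows essentially the same route as the paper: both arguments reduce the claim to the Young-type bound $\Vert L_{\lambda}f\Vert _{L_{1}(D)}\leq M\Vert f\Vert _{L_{1}(D)}$ via Fubini--Tonelli and condition (a), the only cosmetic difference being that the paper extends $f$ by zero to $\mathbb{R}^{2}$ while you bound the translated inner integral over $D$ by the integral over $\mathbb{R}^{2}$ directly. Your explicit treatment of measurability and the Tonelli justification is in fact slightly more careful than the paper's version.
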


\begin{proof}
Since $L_{\lambda }(f;x,y)$ is linear, it is sufficient to show that the
expression given by%
\begin{equation*}
\left \Vert L_{\lambda }\right \Vert _{1}=\underset{f\neq 0}{\sup }\frac{%
\left \Vert L_{\lambda }(f;x,y)\right \Vert _{L_{1}(D)}}{\left \Vert f\right
\Vert _{L_{1}(D)}}
\end{equation*}%
is bounded.

Let $D=\left \langle a,b\right \rangle \times \left \langle c,d\right
\rangle $ be an arbitrary closed, semi-closed or open bounded rectangle in $%
\mathbb{R}^{2}.$ The function $f$ is extended to $%
\mathbb{R}
^{2}$ by defining $g$ such that

\begin{equation*}
g(t,s)=\left \{ 
\begin{array}{ccc}
f(t,s), & if & (t,s)\in D, \\ 
0, & if & (t,s)\in 
\mathbb{R}
^{2}\backslash D.%
\end{array}%
\right.
\end{equation*}%
Now, using Fubini's Theorem \cite{Butzer}, we have%
\begin{eqnarray*}
\left \Vert L_{\lambda }(f;x,y)\right \Vert _{L_{1}(D)} &=&\underset{D}{%
\diint }\left \vert \underset{%
\mathbb{R}
^{2}}{\diint }g(t,s)K_{\lambda }\left( t-x,s-y\right) dsdt\right \vert dydx
\\
&\leq &\underset{D}{\diint }\left( \underset{%
\mathbb{R}
^{2}}{\diint }\left \vert g(t+x,s+y)\right \vert \left \vert K_{\lambda
}\left( t-x,s-y\right) \right \vert dsdt\right) dydx \\
&=&\underset{%
\mathbb{R}
^{2}}{\diint }\left \vert K_{\lambda }\left( t-x,s-y\right) \right \vert
\left( \underset{D}{\diint }\left \vert g(t+x,s+y)\right \vert dxdy\right)
dydx \\
&=&\underset{%
\mathbb{R}
^{2}}{\diint }\left \vert K_{\lambda }\left( t-x,s-y\right) \right \vert
\left( \underset{%
\mathbb{R}
^{2}}{\diint }\left \vert g(t+x,s+y)\right \vert dxdy\right) dydx \\
&\leq &M\left \Vert f\right \Vert _{L_{1}(D)}.
\end{eqnarray*}%
One may prove the assertion for the case $D=%
\mathbb{R}
^{2}$ using similar method. The proof is completed.
\end{proof}

\section{\textbf{Pointwise} \textbf{convergence}}

The following theorem gives a pointwise approximation of the integral
operators of type (1.3) to the function $f$ at $\mu $-generalized Lebesgue
point of $f\in L_{1}(D)$ whenever $D$ is an arbitrary rectangle in $\mathbb{R%
}^{2}$ such that bounded, closed, semi-closed or open$.$

\begin{theorem}
If $\left( x_{0},y_{0}\right) $\ is a $\mu $-generalized Lebesgue point of
function $f\in L_{1}\left( D\right) ,$ then\textit{\ }%
\begin{equation*}
\underset{\left( x,y,\lambda \right) \rightarrow \left( x_{0},y_{0},\lambda
_{0}\right) }{\lim }L_{\lambda }\left( f;x,y\right) =f\left(
x_{0},y_{0}\right) 
\end{equation*}%
\textit{\ } on any set $Z$\ on which the functions%
\begin{equation}
\overset{x_{0}+\delta }{\underset{x_{0}-\delta }{\int }}\overset{%
y_{0}+\delta }{\underset{y_{0}-\delta }{\int }}\left \vert K_{\lambda
}(t-x,s-y)\right \vert \rho _{1}(\left \vert t-x_{0}\right \vert )\rho
_{2}(\left \vert s-y_{0}\right \vert )dsdt,  \tag{4.1}
\end{equation}%
and%
\begin{equation}
\left \vert K_{\lambda }\left( 0,0\right) \right \vert \mu _{1}(\left \vert
x-x_{0}\right \vert )\text{ \ and \ }\left \vert K_{\lambda }\left( 0,0\right)
\right \vert \mu _{2}(\left \vert y-y_{0}\right \vert )  \tag{4.2}
\end{equation}%
are bounded as $\left( x,y,\lambda \right) $\ tends to\textit{\ }$\left(
x_{0},y_{0},\lambda _{0}\right) $\textit{.} Here, the set $Z$ consists of
the points $(x,y,\lambda )\in D\times \Lambda $ such that the functions
given by (4.1) and (4.2) are bounded for all $\delta >0$ which satisfy $%
0<\delta <\delta _{0}.$
\end{theorem}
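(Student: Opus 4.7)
The plan is to follow the classical Gadjiev--Taberski decomposition, adapted to the two-dimensional, non-symmetric setting here. First I would extend $f$ by zero outside $D$ (as in the proof of Lemma~1), add and subtract $f(x_0,y_0)\iint_{\mathbb{R}^2} K_\lambda(t-x,s-y)\,ds\,dt$, and write
\begin{align*}
L_\lambda(f;x,y) - f(x_0,y_0)
&= \iint_{\mathbb{R}^2}[f(t,s)-f(x_0,y_0)]\,K_\lambda(t-x,s-y)\,ds\,dt \\
&\quad + f(x_0,y_0)\Bigl[\iint_{\mathbb{R}^2} K_\lambda(t-x,s-y)\,ds\,dt - 1\Bigr].
\end{align*}
The second line vanishes by condition~(c). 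For the first, after substituting $u=t-x_0,\ v=s-y_0$, I would split the $(u,v)$-plane as $R_\delta \cup (\mathbb{R}^2 \setminus R_\delta)$ with $R_\delta=[-\delta,\delta]^2$ and $0<\delta<\delta_0$. On $\mathbb{R}^2\setminus R_\delta$, for $(x,y)$ close enough to $(x_0,y_0)$ the shifted kernel argument stays outside some $[-\gamma,\gamma]^2$, so $|f-f(x_0,y_0)|\le|f|+|f(x_0,y_0)|$ combined with the integrability of $f$ and condition~(e) drives this tail to $0$ as $\lambda\to\lambda_0$.

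The delicate part is the local piece on $R_\delta$. I would subdivide it into the four quadrants determined by the signs of $u$ and $v$; on each of them, condition~(f) makes $(u,v)\mapsto|K_\lambda(u+x_0-x,v+y_0-y)|$ bimonotone once $(x,y)$ is close to $(x_0,y_0)$. Introducing the Lebesgue primitive
$$F(h,k) := \int_0^h\!\!\int_0^k \bigl|f(t+x_0,s+y_0)-f(x_0,y_0)\bigr|\,ds\,dt,$$
which by the $\mu$-generalized Lebesgue-point hypothesis satisfies $F(h,k)=o(\mu_1(|h|)\mu_2(|k|))$ as $(h,k)\to(0,0)$, I would perform a bivariate Riemann--Stieltjes integration by parts against $|K_\lambda|$ on each quadrant. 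Remark~1 collapses the bivariate variation to a four-corner difference: after the integration by parts, the boundary contributions are of the type $|K_\lambda(0,0)|\,\mu_1(\cdot)\,\mu_2(\cdot)$, exactly matching the quantities in~(4.2), while the one-variable corner pieces involving $|K_\lambda(\gamma,0)|$ or $|K_\lambda(0,\gamma)|$ are killed by condition~(d), and the remaining volume term is precisely the integral in~(4.1).

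Given $\varepsilon>0$, the $o(\cdot)$ relation for $F$ lets me pick $\delta$ small enough that every ratio $F(\cdot,\cdot)/(\mu_1\mu_2)$ stays below $\varepsilon$ on $R_\delta$; the assumed boundedness of the families~(4.1) and~(4.2) over the set $Z$ then yields an estimate of the form $|L_\lambda(f;x,y)-f(x_0,y_0)|\le C\varepsilon + o(1)$ as $(x,y,\lambda)\to(x_0,y_0,\lambda_0)$ through $Z$, and letting $\varepsilon\to 0$ finishes the argument. The main obstacle will be the careful bookkeeping of the bivariate integration by parts across the four quadrants: signs flip with the bimonotonic regimes, and one must verify that all four corner contributions reduce cleanly to the quantities~(4.1)--(4.2) and to pieces that vanish via~(d) and~(e). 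Once that combinatorial step is systematized, the rest reduces to direct estimation along the template of the one-dimensional case.
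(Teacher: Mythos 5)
Your overall architecture is the same as the paper's: extend $f$ by zero, peel off the term $f(x_0,y_0)\bigl[\iint K_\lambda -1\bigr]$ via condition (c), isolate the central square $[x_0-\delta,x_0+\delta]\times[y_0-\delta,y_0+\delta]$, and on that square run a four-quadrant bivariate integration by parts against $|K_\lambda|$, using Remark~1 and condition (f) to reduce everything to the volume term (4.1), corner terms of the form $|K_\lambda(0,0)|\mu_1\mu_2$ matching (4.2), and edge terms $|K_\lambda(\gamma,0)|$, $|K_\lambda(0,\gamma)|$ killed by (d). That local analysis is exactly the paper's treatment of $I_{12}=I_{121}+\cdots+I_{124}$, and your sketch of it is sound as a plan.

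There is, however, a genuine gap in your treatment of the outer region $\mathbb{R}^2\setminus R_\delta$. You bound $|f-f(x_0,y_0)|\le |f|+|f(x_0,y_0)|$ and claim that ``integrability of $f$ and condition (e)'' kill the tail. Condition (e) controls $\iint_{\text{tail}}|K_\lambda|$, i.e.\ the $L_1$ norm of the kernel off a neighbourhood of the origin; that disposes of the $|f(x_0,y_0)|$ piece, but it says nothing about $\iint_{\text{tail}}|f(t,s)|\,|K_\lambda(t-x,s-y)|\,ds\,dt$ when $f$ is merely in $L_1(D)$ and possibly unbounded: a kernel with a tall, thin spike in the tail can have vanishing $L_1$ mass there while making this product integral large wherever $f$ has an integrable singularity. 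What you actually need for this piece is $\sup|K_\lambda|\to 0$ over the relevant annular region, paired with $\Vert f\Vert_{L_1(D)}$ --- and that is precisely how the paper handles its $I_{11}$: the monotonicity in condition (f) pushes the supremum of $|K_\lambda(t-x,s-y)|$ on $D\setminus D_{22}$ to a point at distance about $\delta/2$ from the origin, and condition (d) sends that value to zero. So replace your (e)-based argument for the $|f|$ part of the tail by the (f)$+$(d) sup estimate, keeping (e) only for the constant part (and for the contribution of $\mathbb{R}^2\setminus D$); with that repair your proof coincides with the paper's.
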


\begin{proof}
The proof begins with the following compulsory assumptions:\ Suppose that $%
\left( x_{0},y_{0}\right) \in D,$ $0<x_{0}-x<\delta /2,$ for all $\delta >0$
which satisfies $x_{0}+\delta <b$ and $x_{0}-\delta >a$, $0<y_{0}-y<\delta
/2 $ for all $\delta >0$ which satisfies $y_{0}+\delta <d$ and $y_{0}-\delta
>c$ whenever $0<\delta <\delta _{0}.$ Let us divide $D$ into the sets $%
D_{ij}=\left \langle c_{i},c_{i+1}\right \rangle \times \left \langle
d_{j},d_{j+1}\right \rangle $ for $1\leq i\leq 3$ and $1\leq j\leq 3,$ where 
$c_{1}=a,$ $c_{2}=x_{0}-\delta ,$ $c_{3}=x_{0}+\delta ,$ $c_{4}=b$ and $%
d_{1}=c,$ $d_{2}=y_{0}-\delta ,$ $d_{3}=y_{0}+\delta ,$ $d_{4}=d.$ Recall
the definition of the function $g(t,s)$ such that%
\begin{equation*}
g\left( t,s\right) =\left \{ 
\begin{array}{c}
f\left( t,s\right) ,\text{ \  \  \  \  \  \ }\left( t,s\right) \in D\text{,} \\ 
0,\text{ \  \  \  \  \  \ }\left( t,s\right) \in 
\mathbb{R}
^{2}\backslash D.%
\end{array}%
\right.
\end{equation*}%
Let $\left( x_{0},y_{0}\right) \in D$ is a $\mu $-generalized Lebesgue point
of function $f\in L_{1}\left( D\right) .$ Therefore, for all given $%
\varepsilon >0$, there exists a $\delta >0$ such that for all $h,k$
satisfying $0<h,k\leq \delta ,$ the inequality:%
\begin{equation}
\overset{x_{0}}{\underset{x_{0}}{\int }}\overset{+\delta y_{0}}{\underset{%
y_{0}-\delta }{\int }}\left \vert f\left( t,s\right) -f\left(
x_{0},y_{0}\right) \right \vert dsdt<\varepsilon \mu _{1}(h)\mu _{2}(k) 
\tag{4.3}
\end{equation}%
holds.

Set $I_{\lambda }(x,y):=\left \vert L_{\lambda }\left( f;x,y\right) -f\left(
x_{0},y_{0}\right) \right \vert $. According to condition (c) of class $A$,
we shall write%
\begin{eqnarray*}
I_{\lambda }(x,y) &=&\left \vert \underset{D}{\diint }f\left( t,s\right)
K_{\lambda }\left( t-x,s-y\right) dsdt-f\left( x_{0},y_{0}\right) \right
\vert \\
&\leq &\underset{\mathbb{R}^{2}}{\diint }\left \vert g\left( t,s\right)
-f\left( x_{0},y_{0}\right) \right \vert \left \vert K_{\lambda }\left(
t-x,s-y\right) \right \vert dsdt \\
&&+\left \vert f\left( x_{0},y_{0}\right) \right \vert \left \vert \underset{%
\mathbb{R}^{2}}{\diint }K_{\lambda }\left( t-x,s-y\right) dsdt-1\right \vert
\end{eqnarray*}%
\begin{eqnarray*}
&=&\underset{D}{\diint }\left \vert f\left( t,s\right) -f\left(
x_{0},y_{0}\right) \right \vert \left \vert K_{\lambda }\left(
t-x,s-y\right) \right \vert dsdt \\
&&+\left \vert f\left( x_{0},y_{0}\right) \right \vert \left \vert \underset{%
\mathbb{R}^{2}}{\diint }K_{\lambda }\left( t-x,s-y\right) dsdt-1\right \vert
\\
&&+\left \vert f\left( x_{0},y_{0}\right) \right \vert \underset{\mathbb{R}%
^{2}\backslash D}{\diint }\left \vert K_{\lambda }\left( t-x,s-y\right)
\right \vert dsdt \\
&=&I_{1}+I_{2}+I_{3}.
\end{eqnarray*}%
It is easy to see that $I_{2}\rightarrow 0$ as $\lambda \rightarrow \lambda
_{0}$ by condition (c) of class $A.$ On the other hand, since%
\begin{eqnarray*}
I_{3} &\leq &\left \vert f\left( x_{0},y_{0}\right) \right \vert \underset{%
\mathbb{R}^{2}\backslash \left \langle x-\delta ,x+\delta \right \rangle
\times \left \langle x-\delta ,x+\delta \right \rangle }{\diint }\left \vert
K_{\lambda }\left( t-x,s-y\right) \right \vert dsdt \\
&=&\left \vert f\left( x_{0},y_{0}\right) \right \vert \underset{%
\mathbb{R}
^{2}\backslash \left \langle -\delta ,+\delta \right \rangle \times \left
\langle -\delta ,+\delta \right \rangle }{\diint }\left \vert K_{\lambda
}\left( t,s\right) \right \vert dsdt,
\end{eqnarray*}%
by condition (e) of class $A$ we have $I_{3}\rightarrow 0$ as $\left(
x,y,\lambda \right) $\ tends to\textit{\ }$\left( x_{0},y_{0},\lambda
_{0}\right) .$

Obviously, the integral $I_{1}$ can be written as follows: 
\begin{eqnarray*}
&&I_{1}=\left \{ \underset{D\backslash D_{22}}{\diint }+\underset{D_{22}}{%
\diint }\right \} \left \vert f\left( t,s\right) -f\left( x_{0},y_{0}\right)
\right \vert \left \vert K_{\lambda }\left( t-x,s-y\right) \right \vert dsdt
\\
&=&I_{11}+I_{12}.
\end{eqnarray*}

We will show that $I_{11}\rightarrow 0$ as $\left( x,y,\lambda \right) $\
tends to\textit{\ }$\left( x_{0},y_{0},\lambda _{0}\right) $\textit{\ }by
following the similar steps as in \cite{Siu1}.\textit{\ }

Now, recall that the rectangle $D$ was splitted into nine rectangles at the
beginning of the proof. Therefore, $I_{11}$ consists of eight integrals.
Now, denote the integral which corresponds to $D_{12}$ by $I_{D_{12}}.$
Hence, by condition (f) of class $A,$ the following inequality 
\begin{eqnarray*}
~I_{D_{12}} &=&\overset{x_{0}}{\underset{a}{\int }}\overset{-\delta
y_{0}+\delta }{\underset{y_{0}-\delta }{\int }}\left \vert f\left( t,s\right)
-f\left( x_{0},y_{0}\right) \right \vert \left \vert K_{\lambda }\left(
t-x,s-y\right) \right \vert dsdt \\
&\leq &\left \vert K_{\lambda }\left( \delta /2,0\right) \right \vert \left(
\left \Vert f\right \Vert _{L_{1}\left( D\right) }+\left \vert f\left(
x_{0},y_{0}\right) \right \vert \left \vert b-a\right \vert \left \vert
d-c\right \vert \right) 
\end{eqnarray*}

holds. By condition (d) of class $A$, $I_{D_{12}}\rightarrow 0$ as $\left(
x,y,\lambda \right) $\ tends to\textit{\ }$\left( x_{0},y_{0},\lambda
_{0}\right) .$ The remaining seven integrals are evaluated by an analogous
method and this part is omitted.

It follows that%
\begin{equation*}
I_{11}\leq \underset{(t,s)\in D\backslash D_{22}}{\sup }\left \vert
K_{\lambda }\left( t-x,s-y\right) \right \vert \left( \left \Vert f\right
\Vert _{L_{1}\left( D\right) }+\left \vert f\left( x_{0},y_{0}\right) \right
\vert \left \vert b-a\right \vert \left \vert d-c\right \vert \right) .
\end{equation*}%
Consequently, $I_{11}\rightarrow 0$ as $\left( x,y,\lambda \right) $\ tends
to\textit{\ }$\left( x_{0},y_{0},\lambda _{0}\right) .$

Next, we prove that $I_{12}$ tends to zero as $\left( x,y,\lambda \right) $
tends to $\left( x_{0},y_{0},\lambda _{0}\right) $. Therefore, it is easy to
see that the following inequality holds for $I_{12}$, that is%
\begin{eqnarray*}
I_{12} &=&\left \{ \overset{x_{0}+\delta }{\underset{x_{0}}{\int }}\overset{%
y_{0}}{\underset{y_{0}-\delta }{\int }}+\overset{x_{0}}{\underset{%
x_{0}-\delta }{\int }}\overset{y_{0}}{\underset{y_{0}-\delta }{\int }}\right
\} \left \vert f\left( t,s\right) -f\left( x_{0},y_{0}\right) \right \vert
\left \vert K_{\lambda }(t-x,s-y)\right \vert dsdt \\
&&+\left \{ \overset{x_{0}}{\underset{x_{0}-\delta }{\int }}\overset{%
y_{0}+\delta }{\underset{y_{0}}{\int }}+\overset{x_{0}+\delta }{\underset{%
x_{0}}{\int }}\overset{y_{0}+\delta }{\underset{y_{0}}{\int }}\right \}
\left \vert f\left( t,s\right) -f\left( x_{0},y_{0}\right) \right \vert
\left \vert K_{\lambda }(t-x,s-y)\right \vert dsdt \\
&=&I_{121}+I_{122}+I_{123}+I_{124}.
\end{eqnarray*}%
Let us consider the integral $I_{121}.$ For the evaluations, we need to
define the following variations:%
\begin{eqnarray*}
A_{1}\left( u,v\right) &:&=\left \{ 
\begin{array}{c}
\overset{x_{0}+\delta -x}{\underset{u}{\dbigvee }}\overset{v}{\underset{%
y_{0}-\delta -y}{\dbigvee }}\left \vert K_{\lambda }\left( t,s\right) \right
\vert ,\text{ \ }x_{0}-x\leq u<x_{0}+\delta -x, \\ 
\text{ \  \  \  \  \  \  \  \  \  \  \  \  \  \  \  \  \  \  \  \  \  \  \  \  \  \  \  \  \  \  \  \  \  \  \
\  \  \  \  \  \  \  \  \  \ }y_{0}-\delta -y<v\leq y_{0}-y, \\ 
0,\text{ \  \  \  \  \  \  \  \  \  \  \  \  \  \  \  \  \  \  \  \  \  \  \  \  \  \  \  \  \  \  \
otherwise.}%
\end{array}%
\right. \\
A_{2}\left( u\right) &:&=\left \{ 
\begin{array}{c}
\overset{x_{0}+\delta -x}{\underset{u}{\dbigvee }}\left \vert K_{\lambda
}(t,y_{0}-\delta -y)\right \vert ,\text{ \  \ }x_{0}-x\leq u<x_{0}+\delta -x,
\\ 
0,\text{ \  \  \  \  \  \  \  \  \  \  \  \  \  \  \  \  \  \  \  \  \  \  \  \  \  \  \  \  \  \  \
otherwise.}%
\end{array}%
\right. \\
A_{3}\left( v\right) &:&=\left \{ 
\begin{array}{c}
\overset{v}{\underset{y_{0}-\delta -y}{\dbigvee }}\left \vert K_{\lambda
}(x_{0}-x+\delta ,y)\right \vert ,\text{ \ }y_{0}-\delta -y<v\leq y_{0}-y,
\\ 
0,\text{ \  \  \  \  \  \  \  \  \  \  \  \  \  \  \  \  \  \  \  \  \  \  \  \  \  \  \  \  \  \  \
otherwise.}%
\end{array}%
\right. .
\end{eqnarray*}%
Taking above variations and (4.3) into account and applying bivariate
integration by parts method (see, e.g., \cite{Taberski2}) to last
inequality, we have%
\begin{eqnarray*}
I_{121} &\leq &-\varepsilon \overset{x_{0}-x+\delta }{\underset{x_{0}-x}{%
\int }}\overset{y_{0}-y}{\underset{y_{0}-y-\delta }{\int }}\left[
A_{1}\left( t,s\right) +A_{2}\left( t\right) +A_{3}\left( s\right) +\left
\vert K_{\lambda }\left( x_{0}-x+\delta ,y_{0}-\delta -y\right) \right \vert %
\right] \\
&&\times \left \{ \mu _{1}\left( t-x_{0}+x\right) \right \} _{t}^{\prime
}\left \{ \mu _{2}\left( y_{0}-s-y\right) \right \} _{s}^{\prime }dsdt \\
&=&\varepsilon \left( i_{1}+i_{2}+i_{3}+i_{4}\right) .
\end{eqnarray*}%
For the similar situation, we refer the reader to \cite{Taberski2, Riz2}.

Now, using Remark 1 and condition (f) of class $A,$ we get 
\begin{eqnarray*}
i_{1}+i_{2}+i_{3}+i_{4} &=&-\overset{x_{0}-x+\delta }{\underset{x_{0}-x}{%
\int }}\overset{0}{\underset{y_{0}-y-\delta }{\int }}\left \vert K_{\lambda
}\left( t,s\right) \right \vert \left \{ \mu _{1}\left( t-x_{0}+x\right)
\right \} _{t}^{\prime }\left \{ \mu _{2}\left( y_{0}-s-y\right) \right \}
_{s}^{\prime }dsdt \\
&&+\overset{x_{0}-x+\delta }{\underset{x_{0}-x}{\int }}\overset{y_{0}-y}{%
\underset{0}{\int }}\left( \left \vert K_{\lambda }\left( t,s\right)
\right \vert -2K_{\lambda }\left( t,0\right) \right) \left \{ \mu _{1}\left(
t-x_{0}+x\right) \right \} _{t}^{\prime }\left \{ \mu _{2}\left(
y_{0}-s-y\right) \right \} _{s}^{\prime }dsdt.
\end{eqnarray*}%
Hence, the following inequality holds for $I_{121}:$%
\begin{eqnarray*}
I_{121} &\leq &\varepsilon \overset{x_{0}+\delta }{\underset{x_{0}}{\int }}%
\overset{y_{0}}{\underset{y_{0}-\delta }{\int }}\left \vert K_{\lambda
}\left( t-x,s-y\right) \right \vert \left \vert \left \{ \mu _{1}\left(
t-x_{0}\right) \right \} _{t}^{\prime }\right \vert \left \vert \left \{ \mu
_{2}\left( y_{0}-s\right) \right \} _{s}^{\prime }\right \vert dsdt \\
&&+2\varepsilon \left \vert K_{\lambda }\left( 0,0\right) \right \vert \mu
_{1}\left( \delta \right) \mu _{2}\left( \left \vert y_{0}-y\right \vert
\right) .
\end{eqnarray*}%
Analogous computations for $I_{122},$ $I_{123}$ and $I_{124}$ yield:%
\begin{eqnarray*}
I_{122} &\leq &\varepsilon \overset{x_{0}}{\underset{x_{0}-\delta }{\int }}%
\overset{y_{0}}{\underset{y_{0}-\delta }{\int }}\left \vert K_{\lambda
}\left( t-x,s-y\right) \right \vert \left \vert \left \{ \mu _{1}\left(
x_{0}-t\right) \right \} _{t}^{\prime }\right \vert \left \vert \left \{ \mu
_{2}\left( y_{0}-s\right) \right \} _{s}^{\prime }\right \vert dsdt \\
&&+2\varepsilon \left \vert K_{\lambda }\left( 0,0\right) \right \vert \left(
\mu _{1}\left( \delta \right) \mu _{2}\left( \left \vert y_{0}-y\right \vert
\right) +\mu _{2}\left( \delta \right) \mu _{1}\left( \left \vert
x_{0}-x\right \vert \right) \right) +4\varepsilon K_{\lambda }\left( 0\right)
\mu _{1}\left( \left \vert x_{0}-x\right \vert \right) \mu _{2}\left(
\left \vert y_{0}-y\right \vert \right) ,
\end{eqnarray*}%
\begin{eqnarray*}
I_{123} &\leq &\varepsilon \overset{x_{0}}{\underset{x_{0}-\delta }{\int }}%
\overset{y_{0}+\delta }{\underset{y_{0}}{\int }}\left \vert K_{\lambda
}\left( t-x,s-y\right) \right \vert \left \vert \left \{ \mu _{1}\left(
x_{0}-t\right) \right \} _{t}^{\prime }\right \vert \left \vert \left \{ \mu
_{2}\left( s-y_{0}\right) \right \} _{s}^{\prime }\right \vert dsdt \\
&&+2\varepsilon \left \vert K_{\lambda }\left( 0,0\right) \right \vert \mu
_{2}\left( \delta \right) \mu _{1}\left( \left \vert x_{0}-x\right \vert
\right) , \\
I_{124} &\leq &\varepsilon \overset{x_{0}+\delta }{\underset{x_{0}}{\int }}%
\overset{y_{0}+\delta }{\underset{y_{0}}{\int }}\left \vert K_{\lambda
}\left( t-x,s-y\right) \right \vert \left \vert \left \{ \mu _{1}\left(
t-x_{0}\right) \right \} _{t}^{\prime }\right \vert \left \vert \left \{ \mu
_{2}\left( s-y_{0}\right) \right \} _{s}^{\prime }\right \vert dsdt.
\end{eqnarray*}%
Hence the following inequality is obtained for $I_{12}:$%
\begin{eqnarray*}
I_{12} &\leq &\varepsilon \overset{x_{0}+\delta }{\underset{x_{0}-\delta }{%
\int }}\overset{y_{0}+\delta }{\underset{y_{0}-\delta }{\int }}\left \vert
K_{\lambda }\left( t-x,s-y\right) \right \vert \left \vert \left \{ \mu
_{1}\left( \left \vert x_{0}-t\right \vert \right) \right \} _{t}^{\prime
}\right \vert \left \vert \left \{ \mu _{2}\left( \left \vert y_{0}-s\right \vert
\right) \right \} _{s}^{\prime }\right \vert dsdt \\
&&+4\varepsilon \left \vert K_{\lambda }\left( 0,0\right) \right \vert \left(
\mu _{1}\left( \delta \right) \mu _{2}\left( \left \vert y_{0}-y\right \vert
\right) +\mu _{2}\left( \delta \right) \mu _{1}\left( \left \vert
x_{0}-x\right \vert \right) \right)  \\
&&+4\varepsilon \left \vert K_{\lambda }\left( 0,0\right) \right \vert \mu
_{1}\left( \left \vert x_{0}-x\right \vert \right) \mu _{2}\left( \left \vert
y_{0}-y\right \vert \right) 
\end{eqnarray*}%
or equivalently,

\begin{eqnarray*}
I_{12} &\leq &\varepsilon \overset{x_{0}+\delta }{\underset{x_{0}-\delta }{%
\int }}\overset{y_{0}+\delta }{\underset{y_{0}-\delta }{\int }}K_{\lambda
}\left( \sqrt{(t-x)^{2}+(s-y)^{2}}\right) \rho _{1}(\left \vert
t-x_{0}\right \vert )\rho _{2}(\left \vert s-y_{0}\right \vert )dsdt \\
&&+4\varepsilon \left \vert K_{\lambda }\left( 0,0\right) \right \vert
\left( \mu _{1}\left( \delta \right) \mu _{2}\left( \left \vert
y_{0}-y\right \vert \right) +\mu _{2}\left( \delta \right) \mu _{1}\left(
\left \vert x_{0}-x\right \vert \right) \right) \\
&&+4\varepsilon \left \vert K_{\lambda }\left( 0,0\right) \right \vert \mu
_{1}\left( \left \vert x_{0}-x\right \vert \right) \mu _{2}\left( \left
\vert y_{0}-y\right \vert \right) .
\end{eqnarray*}%
The remaining part of the proof is obvious by the hyphotheses (4.1) and
(4.2). Thus the proof is completed.
\end{proof}

The following theorem gives a pointwise approximation of the integral
operators of type (1.3) to the function $f$ at $\mu $-generalized Lebesgue
point of $f\in L_{1}(%
\mathbb{R}
^{2})$.

\begin{theorem}
Suppose that the hypotheses of Theorem 1 are satisfied for $D=%
\mathbb{R}
^{2}$. If $\left( x_{0},y_{0}\right) $\ is a $\mu $-generalized Lebesgue
point of $f\in L_{1}(%
\mathbb{R}
^{2}),$ then \textit{\ }%
\begin{equation*}
\underset{\left( x,y,\lambda \right) \rightarrow \left( x_{0},y_{0},\lambda
_{0}\right) }{\lim }L_{\lambda }\left( f;x,y\right) =f\left(
x_{0},y_{0}\right) .
\end{equation*}%
\textbf{\ }
\end{theorem}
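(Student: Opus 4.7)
The plan is to adapt the argument of Theorem 1 to $D = \mathbb{R}^2$, retaining the overall structure while handling the two features that change when the domain becomes unbounded. First, since $f$ is already defined on $\mathbb{R}^2$, the auxiliary extension $g$ is unnecessary, and the tail term $I_3 = |f(x_0,y_0)| \iint_{\mathbb{R}^2 \setminus D} |K_\lambda|$ drops out because its domain of integration is empty. Writing $|L_\lambda(f;x,y) - f(x_0,y_0)| \leq I_1 + I_2$, the term $I_2 \to 0$ follows immediately from condition (c).

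I would then split $I_1 = I_{11} + I_{12}$ along the square $D_{22} = [x_0 - \delta, x_0 + \delta] \times [y_0 - \delta, y_0 + \delta]$. The near-point piece $I_{12}$ is treated verbatim as in Theorem 1: the $\mu$-generalized Lebesgue point inequality (4.3), bivariate integration by parts, Remark 1, and the auxiliary variations $A_1, A_2, A_3$ produce the same bound in terms of the quantities in (4.1) and (4.2), whose boundedness on $Z$ forces $I_{12} \to 0$.

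The only part demanding new work is the far piece
\[
I_{11} = \iint_{\mathbb{R}^2 \setminus D_{22}} |f(t,s) - f(x_0,y_0)|\,|K_\lambda(t-x,s-y)|\, ds\, dt,
\]
which I would split via $|f(t,s) - f(x_0,y_0)| \leq |f(t,s)| + |f(x_0,y_0)|$ into a constant-weighted contribution and an $|f|$-weighted contribution. The constant-weighted contribution, after the translation $u = t-x$, $v = s-y$ and the observation that for $(x,y)$ close enough to $(x_0,y_0)$ the shifted region sits inside $\mathbb{R}^2 \setminus [-\delta/2,\delta/2]^2$, vanishes by condition (e). For the $|f|$-weighted contribution, I would fix $\varepsilon > 0$ and use integrability of $f$ to choose $R$ so large that $\iint_{\mathbb{R}^2 \setminus [-R,R]^2} |f| < \varepsilon$, then split the integration domain at $[-R,R]^2$. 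On the compact annulus $[-R,R]^2 \setminus D_{22}$, the integrand is dominated by $\|f\|_{L_1}\,\sup |K_\lambda(t-x,s-y)|$, where the supremum tends to $0$ by the same monotonicity-plus-(d) argument used for the eight pieces of $I_{11}$ in Theorem 1. On the remote region $\mathbb{R}^2 \setminus [-R,R]^2$, I would use the bimonotonicity in (f) to majorize $|K_\lambda|$ and combine this with the tail bound (e) and the tail mass $\varepsilon$ of $|f|$.

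The main obstacle is precisely this last estimate on the remote region: in Theorem 1 the analogous bound was absorbed by the finite factor $(b-a)(d-c)$, which is no longer available, so one cannot crudely dominate $I_{11}$ by a single supremum of $|K_\lambda|$ times a constant of finite measure. The delicate step is to show that the remote contribution is uniformly small in $\lambda$, which is exactly where conditions (e) and (f) must work in tandem with the absolute integrability of $f$. Once this estimate is secured, assembling $I_{11} + I_{12} + I_2 \to 0$ as $(x,y,\lambda) \to (x_0,y_0,\lambda_0)$ completes the proof.
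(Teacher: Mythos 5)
The paper gives no actual proof of Theorem 2 --- it is dismissed in one line as ``quite similar'' to Theorem 1 --- so your proposal has to be judged as a free-standing adaptation. Your skeleton is the right one: dropping the extension $g$ and the term $I_3$, keeping $I_2$ and the near-point piece $I_{12}$ verbatim (Lebesgue-point inequality, bivariate integration by parts, the variations $A_1,A_2,A_3$), and recognizing that the only place the bounded-domain argument genuinely breaks is the far piece $I_{11}$, where the factor $(b-a)(d-c)$ disappears. Your treatment of the constant-weighted part of $I_{11}$ via translation and condition (e) is correct.

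The gap is in your remote-region estimate for the $|f|$-weighted part. You propose to ``use the bimonotonicity in (f) to majorize $|K_\lambda|$'' on $\mathbb{R}^2\setminus[-R,R]^2$, but condition (f) only asserts monotonicity of $|K_\lambda(t,s)|$ on the fixed box $(-\delta_1,\delta_1)\times(-\delta_2,\delta_2)$ around the origin; for $R$ large this box does not even meet the shifted remote region, so (f) gives no pointwise control there. Condition (e) controls only the \emph{integral} of $|K_\lambda|$ off a neighborhood of the origin, and condition (d) controls $|K_\lambda|$ only on the coordinate axes. Consequently the product of the tail mass $\varepsilon$ of $|f|$ with ``$\sup|K_\lambda|$ over the remote region'' is not bounded by anything the hypotheses supply: a kernel with tall, thin spikes far from the origin satisfies (a)--(f) yet defeats this step when $f$ is unbounded near a spike. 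The same objection applies, less visibly, to your compact-annulus step once $R>\delta_1$: the supremum of $|K_\lambda(t-x,s-y)|$ over $[-R,R]^2\setminus D_{22}$ is not forced to vanish by (d) and (f) alone. To close the argument you need an explicit hypothesis of the form $\lim_{\lambda\to\lambda_0}\sup_{(t,s)\in\mathbb{R}^2\setminus\langle-\gamma,\gamma\rangle\times\langle-\gamma,\gamma\rangle}|K_\lambda(t,s)|=0$ for every $\gamma>0$ (which is what (d) plus (f) actually yield \emph{inside} the monotonicity box, and what comparable papers assume globally), or else restrict to bounded $f$. It is fair to note that the paper's own proof of Theorem 1 already leans on such an unstated supremum bound when it writes $I_{11}\le\sup_{(t,s)\in D\setminus D_{22}}|K_\lambda(t-x,s-y)|\cdot(\cdots)$, so you have inherited, rather than introduced, the weakness --- but in the unbounded setting it can no longer be papered over by the finite measure of $D$.
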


\begin{proof}
The proof of this theorem is quite similar to proof of preceding one and
thus, it is omitted.
\end{proof}

\section{\textbf{Rate of Convergence}}

In this section, we give a theorem concerning the rate of pointwise
convergence of the operators of type (1.3).

\begin{theorem}
Suppose that the hypotheses of Theorem 1 (or Theorem 2) are satisfied.
\end{theorem}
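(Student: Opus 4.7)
The plan is to re-use, almost verbatim, the decomposition developed in the proof of Theorem 1 and then quantify each of the four pieces that appeared there in terms of the rate hypotheses that the theorem presumably imposes (namely, quantitative versions of the kernel conditions (c), (d), (e), the boundedness assumptions (4.1)--(4.2), and possibly the $\mu$-generalized Lebesgue condition). Concretely, I would again write
\begin{equation*}
\bigl|L_{\lambda}(f;x,y)-f(x_{0},y_{0})\bigr|\le I_{1}+I_{2}+I_{3},
\end{equation*}
where $I_{1},I_{2},I_{3}$ are exactly the three integrals introduced in the proof of Theorem 1, and split $I_{1}=I_{11}+I_{12}$ with the same splitting into $D\setminus D_{22}$ and $D_{22}$.

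Next, I would assign a rate to each of the four convergences that were proved qualitatively in Theorem 1. First, $I_{2}$ is absorbed into the rate coming from a quantitative version of condition (c), i.e., $\bigl|\iint_{\mathbb R^{2}}K_{\lambda}(t-x,s-y)\,ds\,dt-1\bigr|$ as $(x,y,\lambda)\to(x_{0},y_{0},\lambda_{0})$. Second, $I_{3}$ is controlled by the quantitative version of condition (e), the tail integral $\iint_{\mathbb R^{2}\setminus\langle-\delta,\delta\rangle^{2}}|K_{\lambda}(t,s)|\,ds\,dt$. Third, the estimate for $I_{11}$ from the proof of Theorem 1 shows that $I_{11}$ is at most a constant multiple of $\sup_{(t,s)\in D\setminus D_{22}}|K_{\lambda}(t-x,s-y)|$, which by condition (d) also has a rate attached to it in the hypothesis. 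Finally, $I_{12}$ is dominated, by the very inequality derived at the end of the proof of Theorem 1, by a constant multiple of $\varepsilon$ times the sum of the quantities (4.1) and $|K_{\lambda}(0,0)|\,\mu_{1}(|x-x_{0}|)$, $|K_{\lambda}(0,0)|\,\mu_{2}(|y-y_{0}|)$, $|K_{\lambda}(0,0)|\,\mu_{1}(\delta)\,\mu_{2}(|y_{0}-y|)$ and $|K_{\lambda}(0,0)|\,\mu_{2}(\delta)\,\mu_{1}(|x_{0}-x|)$; each of these is assumed to carry its own rate.

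Adding the four contributions yields a single estimate of the form
\begin{equation*}
\bigl|L_{\lambda}(f;x,y)-f(x_{0},y_{0})\bigr|
= O\!\left(\Phi_{1}(\lambda)+\Phi_{2}(\lambda)+\Phi_{3}(x,y,\lambda)+\Phi_{4}(x,y,\lambda)\right),
\end{equation*}
where the $\Phi_{i}$ are the rate functions attached, respectively, to (c), (e), (d) and the pair (4.1)--(4.2). The main obstacle I anticipate is purely bookkeeping rather than analytic: one must be careful that the $\varepsilon$ coming from the $\mu$-generalized Lebesgue condition is treated as a fixed constant (having first chosen $\delta$ to realize it), so that the rate assertions refer only to the asymptotic behaviour of the kernel-dependent quantities, and one must verify that the choice of $\delta$ needed to make (4.3) hold is compatible with the fixed $\delta<\delta_{0}\le\min\{\delta_{1},\delta_{2}\}$ required by condition (f). Apart from this, no new analytic step is needed beyond what is already done in Theorem 1; the rate theorem is a quantitative reading of that proof.
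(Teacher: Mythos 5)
Your proposal is correct and follows essentially the same route as the paper: the published proof simply restates the final inequality obtained for $I_{2}$, $I_{3}$, $I_{11}$, and $I_{12}$ in the proof of Theorem 1 and then invokes the hypotheses (i)--(iv), which declare each of those terms to be $o(\Delta(\lambda,\delta,x,y))$ with $\Delta$ being exactly the quantity in (4.1). The only cosmetic difference is that the paper gauges all four contributions against the single function $\Delta$ (obtaining $o(\Delta)$ by letting $\varepsilon$ be arbitrary) rather than keeping four separate rate functions $\Phi_{i}$, but the decomposition and the bookkeeping are the same as yours.
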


Let 
\begin{equation*}
\Delta (\lambda ,\delta ,x,y)=\overset{x_{0}+\delta }{\underset{x_{0}-\delta 
}{\int }}\overset{y_{0}+\delta }{\underset{y_{0}-\delta }{\int }}\left \vert
K_{\lambda }\left( t-x,s-y\right) \right \vert \rho _{1}(\left \vert
t-x_{0}\right \vert )\rho _{2}(\left \vert s-y_{0}\right \vert )dsdt
\end{equation*}%
for $0<\delta <\delta _{0},$ and the following assumptions are satisfied:$%
\bigskip $

\begin{description}
\item[i] $\Delta (\lambda ,\delta ,x,y)\rightarrow 0$ as $(x,y,\lambda
)\rightarrow (x_{0},y_{0},\lambda _{0})$ for some $\delta >0.$

\item[ii] For every $\gamma >0,$%
\begin{equation*}
\left \vert K_{\lambda }(\gamma ,0)\right \vert =\left \vert K_{\lambda
}(0,\gamma )\right \vert =o(\Delta (\lambda ,\delta ,x,y))
\end{equation*}%
as $(x,y,\lambda )\rightarrow (x_{0},y_{0},\lambda _{0}).$

\item[iii] For every $\gamma >0,$%
\begin{equation*}
\underset{\lambda \rightarrow \lambda _{0}}{\lim }\underset{%
\mathbb{R}
^{2}\backslash \left \langle -\gamma ,\gamma \right \rangle \times
\left \langle -\gamma ,\gamma \right \rangle }{\diint }\left \vert K_{\lambda
}\left( t,s\right) \right \vert dsdt=o(\Delta (\lambda ,\delta ,x,y))
\end{equation*}%
as $(x,y,\lambda )\rightarrow (x_{0},y_{0},\lambda _{0}).$

\item[iv] As $(x,y,\lambda )\rightarrow (x_{0},y_{0},\lambda _{0}),$ we have%
\begin{equation*}
\left \vert \underset{\mathbb{R}^{2}}{\diint }K_{\lambda }\left(
t-x,s-y\right) dsdt-1\right \vert =o(\Delta (\lambda ,\delta ,x,y)).
\end{equation*}
\end{description}

Then, at each $\mu $-generalized Lebesgue point of $f\in L_{1}(D)$ we have%
\begin{equation*}
\left \vert L_{\lambda }\left( f;x,y\right) -f\left( x_{0},y_{0}\right)
\right \vert =o(\Delta (\lambda ,\delta ,x,y))
\end{equation*}

as $(x,y,\lambda )\rightarrow (x_{0},y_{0},\lambda _{0}).$

\begin{proof}
By the hypotheses of Theorem 1, we can write for $\delta >0$%
\begin{eqnarray*}
\left \vert L_{\lambda }\left( f;x,y\right) -f\left( x_{0},y_{0}\right)
\right \vert  &\leq &\left \vert f\left( x_{0},y_{0}\right) \right \vert
\left \vert \underset{\mathbb{R}^{2}}{\diint }K_{\lambda }\left(
t-x,s-y\right) dsdt-1\right \vert  \\
&&+\left \vert f\left( x_{0},y_{0}\right) \right \vert \underset{%
\mathbb{R}
^{2}\backslash \left \langle -\delta ,+\delta \right \rangle \times
\left \langle -\delta ,+\delta \right \rangle }{\diint }\left \vert K_{\lambda
}\left( t,s\right) \right \vert dsdt \\
&&+\underset{(t,s)\in D\backslash D_{22}}{\sup }\left \vert K_{\lambda
}\left( t-x,s-y\right) \right \vert \left( \left \Vert f\right \Vert
_{L_{1}\left( D\right) }+\left \vert f\left( x_{0},y_{0}\right) \right \vert
\left \vert b-a\right \vert \left \vert d-c\right \vert \right) 
\end{eqnarray*}%
\begin{eqnarray*}
&&+\varepsilon \overset{x_{0}+\delta }{\underset{x_{0}-\delta }{\int }}%
\overset{y_{0}+\delta }{\underset{y_{0}-\delta }{\int }}K_{\lambda }\left( 
\sqrt{(t-x)^{2}+(s-y)^{2}}\right) \rho _{1}(\left \vert t-x_{0}\right \vert
)\rho _{2}(\left \vert s-y_{0}\right \vert )dsdt \\
&&+4\varepsilon \left \vert K_{\lambda }\left( 0,0\right) \right \vert \left(
\mu _{1}\left( \delta \right) \mu _{2}\left( \left \vert y_{0}-y\right \vert
\right) +\mu _{2}\left( \delta \right) \mu _{1}\left( \left \vert
x_{0}-x\right \vert \right) \right)  \\
&&+4\varepsilon \left \vert K_{\lambda }\left( 0,0\right) \right \vert \mu
_{1}\left( \left \vert x_{0}-x\right \vert \right) \mu _{2}\left( \left \vert
y_{0}-y\right \vert \right) .
\end{eqnarray*}%
From (i)-(iv) and using class $A$ conditions, we have\ the desired result
for the case $D$ is bounded, that is%
\begin{equation*}
\left \vert L_{\lambda }\left( f;x,y\right) -f\left( x_{0},y_{0}\right)
\right \vert =o(\Delta (\lambda ,\delta ,x,y)).
\end{equation*}%
One may prove the assertion for the case $D=%
\mathbb{R}
^{2}$ following the same steps. Thus, the proof is completed.
\end{proof}

\begin{example}
In this example, we used two dimensional counterpart of the kernel function
used in \cite{karsli}.
\end{example}

Let $\Lambda =[1,\infty ),$ $\lambda _{0}=\infty $ and 
\begin{equation*}
K_{\lambda }(t,s)=\left \{ 
\begin{array}{c}
\lambda ^{2},\text{ \  \ }(t,s)\in \lbrack 0,1/\lambda ]\times \lbrack
0,1/\lambda ], \\ 
0,\text{ \  \  \  \  \  \  \  \  \  \  \  \  \  \  \ }%
\mathbb{R}
^{2}\backslash \lbrack 0,1/\lambda ]\times \lbrack 0,1/\lambda ].%
\end{array}%
\right.
\end{equation*}

It is easy to check that given $K_{\lambda }(t,s)$ is from the class $A.$
Let $\mu _{1}(t)=t,$ $\mu _{2}(s)=s$.

Hence, we obtain

\begin{eqnarray*}
\Delta (\lambda ,\delta ,x,y) &=&\overset{x_{0}+\delta }{\underset{%
x_{0}-\delta }{\int }}\overset{y_{0}+\delta }{\underset{y_{0}-\delta }{\int }%
}\left \vert K_{\lambda }\left( t-x,s-y\right) \right \vert \left \vert \left \{
\mu _{1}(\left \vert t-x_{0}\right \vert )\right \} _{t}^{^{\prime
}}\right \vert \left \vert \left \{ \mu _{2}(\left \vert s-y_{0}\right \vert
)\right \} _{s}^{^{\prime }}\right \vert dsdt \\
&=&4\delta ^{2}\lambda ^{2}.
\end{eqnarray*}%
In order to find for which $\delta >0$ the condition $(i)$ in Theorem 1 is
satisfied, let $\Delta (\lambda ,\delta ,x,y)\rightarrow 0$ as $(x,y,\lambda
)\rightarrow (x_{0},y_{0},\infty )$. Hence 
\begin{equation*}
\underset{(x,y,\lambda )\rightarrow (0,0,0)}{\lim }\Delta (\lambda ,\delta
,x,y)=0
\end{equation*}%
if and only if $\delta =o(1/\lambda ).$ Consequently, the following equation%
\begin{equation*}
\Delta (\lambda ,\delta ,x,y)=\underset{}{\underset{[0,1/\lambda ^{1+\alpha
}]\times \lbrack 0,1/\lambda ^{1+\alpha }]}{\diint }}\lambda ^{2}dsdt
\end{equation*}%
holds for $\alpha \in (0,\infty ).$ From above equation we see that 
\begin{equation*}
\Delta (\lambda ,\delta ,x,y)=O(1/\lambda ^{1+\alpha }).
\end{equation*}

By definition of $K_{\lambda }\left( t,s\right) ,$the conditions (ii) and
(iii) of Theorem 5.1 are satisfied.

The last terms which must be considered are $K_{\lambda }\left( 0,0\right)
\mu _{2}(\left \vert y-y_{0}\right \vert )$ and $K_{\lambda }\left(
0,0\right) \mu _{1}(\left \vert x-x_{0}\right \vert ).$ Analyzing the
following limits, we see that 
\begin{eqnarray*}
\underset{(x,y,\lambda )\rightarrow (0,0,0)}{\lim }K_{\lambda }\left(
0,0\right) \mu _{1}\left \vert x-x_{0}\right \vert &=&\lambda ^{2}\left
\vert x-x_{0}\right \vert =c<\infty , \\
\underset{(x,y,\lambda )\rightarrow (0,0,0)}{\lim }K_{\lambda }\left(
0,0\right) \mu _{2}\left \vert y-y_{0}\right \vert &=&\lambda ^{2}\left
\vert y-y_{0}\right \vert =c^{\prime }<\infty ,
\end{eqnarray*}%
if and only if the rates of convergence of $\lambda ^{2}$ $\rightarrow
\infty $ and $x\rightarrow x_{0},$ $\lambda ^{2}$ $\rightarrow \infty $ and $%
y\rightarrow y_{0}$ are equivalent.

Hence%
\begin{equation*}
\left \vert L_{\lambda }\left( f;x,y\right) -f\left( x_{0},y_{0}\right)
\right \vert =o\left( \Delta (\lambda ,\delta ,x,y)\right) =o\left( 1/\lambda
^{1+\alpha }\right) .
\end{equation*}

\end{document}